\newtheorem{theorem}{\sc Theorem}[section]
\newtheorem{lem}[theorem]{\sc Lemma}
 \newtheorem*{thmA}{Theorem A}
 \newtheorem*{thmB}{Theorem B}
 \DeclareMathOperator{\PSL}{PSL}
 \DeclareMathOperator{\FC}{FC}
 \DeclareMathOperator{\ASL}{ASL}
 \DeclareMathOperator{\Aut}{Aut} 
 \DeclareMathOperator{\Tor}{Tor}  
 \DeclareMathOperator{\cent}{Z}
\title[Automorphism Orbits]{Soluble Groups with few orbits under automorphisms}
\author[Bastos]{Raimundo  Bastos}
\address{Departamento de Matem\'atica, Universidade de Bras\'ilia,
Brasilia-DF, 70910-900 Brazil}
\email{(Bastos) bastos@mat.unb.br}
\email{(Dantas) alexcdan@gmail.br}
\email{(de Melo) emersonueg@hotmail.com}
\author[Dantas]{Alex C. Dantas}
\author[de Melo]{Emerson de Melo}
\subjclass[2010]{20E22; 20E36.}
\keywords{Extensions; Automorphisms; Soluble groups}
\thanks{This work was partially supported by FAPDF - Brazil.}
\begin{document}
\maketitle

\begin{abstract}
Let $G$ be a group. The orbits of the natural action of $\Aut(G)$ on $G$ are called ``automorphism orbits'' of $G$, and the number of automorphism orbits of $G$ is denoted by $\omega(G)$. We prove that if $G$ is a soluble group with finite rank such that  $\omega(G)< \infty$, then $G$ contains a torsion-free characteristic nilpotent subgroup $K$ such that $G = K \rtimes H$, where $H$ is a finite group. Moreover, we classify the mixed order soluble groups of finite rank such that $\omega(G)=3$.
\end{abstract}

\maketitle

\section{Introduction}

Let $G$ be a group. The orbits of the natural action of $\Aut(G)$ on $G$ are called ``automorphism orbits'' of $G$, and the number of automorphism orbits of $G$ is denoted by $\omega(G)$. It is interesting to ask what can we
say about ``$G$'' only knowing $\omega(G)$. It is obvious that $\omega(G)=1$ if and only if $G=\{1\}$, and it is well known that if $G$ is a finite group then $\omega(G) = 2$ if and only if $G$ is elementary abelian. In \cite{LM}, T.\,J. Laffey and D. MacHale proved that if $G$ is a finite non-soluble group with $\omega(G) \leqslant 4$, then $G$ is isomorphic to $\PSL(2,\mathbb{F}_4)$. Later, M. Stroppel, in \cite{S1}, has shown that the only finite non-abelian simple groups $G$ with $\omega(G) \leq 5$ are the groups $\PSL(2,\mathbb{F}_q)$ with $q \in \{4,7,8,9\}$. In \cite{BDG}, the authors prove that if $G$ is a finite non-soluble group with $\omega(G) \leq 6$, then $G$ is isomorphic to one of $\PSL(2,\mathbb{F}_q)$ with $q \in \{4,7,8,9\}$, $\PSL(3,\mathbb{F}_4)$ or $\ASL(2,\mathbb{F}_4)$ (answering a question of M. Stroppel, cf. \cite[Problem 2.5]{S1}). 

Some aspects of automorphism orbits are also investigated for infinite groups.  M. Schwachh\"ofer and M. Stroppel in \cite[Lemma 1.1]{S2}, have shown that if $G$ is an abelian group with finitely many automorphism orbits, then $G = \Tor(G) \oplus D$, where $D$ is a characteristic torsion-free divisible subgroup of $G$ and $\Tor(G)$ is the set of all torsion elements in $G$. In \cite[Theorem A]{BD18}, the authors proved that if $G$ is a $\FC$-group with finitely many automorphism orbits, then the derived subgroup $G'$ is finite and $G$ admits a decomposition $G = \Tor(G) \times A$, where $A$ is a divisible characteristic subgroup of $\cent(G)$. For more details concerning automorphism orbits of groups see \cite{S1}. 

If $G$ is a group and $r$ is a positive integer, then $G$ is said to have finite
rank $r$ if each finitely generated subgroup of $G$ can be generated by $r$ or fewer elements and if $r$ is the least such integer. The next result can be viewed as a generalization of the above mentioned results from \cite{BD18} and \cite{S2}.  

\begin{thmA}
Let $G$ be a soluble group of finite rank. If $\omega(G) < \infty$, then $G$ has a torsion-free radicable nilpotent subgroup $K$ such that $G = K \rtimes H$, where $H$ is a finite subgroup.
\end{thmA}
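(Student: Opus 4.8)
The plan is to convert the single hypothesis $\omega(G)<\infty$ into three structural facts—bounded torsion, a torsion-free radicable nilpotent ``core'' $K$, and a finite complement—and to establish them in that order, leaning on the structure theory of soluble groups of finite rank together with the abelian base case of Schwachh\"ofer--Stroppel \cite{S2}. I would open with two elementary inheritance properties that make $\omega$ usable along a characteristic series: if $C$ is a \emph{characteristic} subgroup then $\omega(C)\leq\omega(G)$ and $\omega(G/C)\leq\omega(G)$, because every $\phi\in\Aut(G)$ restricts to $C$ and induces an automorphism of $G/C$, and the relevant maps are $\Aut(G)$-equivariant, so orbits map onto orbits. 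Since automorphisms preserve orders, elements lying in a common orbit have equal order; hence $G$ has at most $\omega(G)$ distinct element orders, and in particular the periodic elements of $G$ have bounded order, i.e. the torsion part of $G$ has finite exponent.

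Next I would produce the radicable core. Let $F$ be the Fitting subgroup of $G$, which is nilpotent because $G$ has finite rank. Applying \cite{S2} to the abelian factors of a central series of $F$ (each of which inherits $\omega<\infty$) shows that each such factor is the sum of a finite group and a torsion-free radicable group, and I would climb the series, assembling the radicable pieces into a single torsion-free radicable nilpotent normal subgroup $K$. Maximality (equivalently, canonicity of this construction) makes $K$ characteristic; radicability together with the bounded exponent of the torsion part forces $K$ to be torsion-free, since a nontrivial radicable periodic group is an unbounded direct sum of Pr\"ufer groups and would violate the exponent bound. The torsion-free abelian factors of the upper central series of $K$ are then uniquely divisible, i.e. $\mathbb{Q}$-vector spaces, a point I will reuse for the splitting.

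The crux is to show $G/K$ is finite, and this is where I expect the main resistance. Two things must be controlled. First, $F/K$ is periodic by construction; being a periodic soluble group of finite rank it is \v{C}ernikov, and finite exponent kills its divisible part, so $F/K$ is finite. Second, I must bound $G/F$: here $G/C_G(F)$ is essentially a group of automorphisms of the torsion-free nilpotent group $F/(\text{its torsion})$, whose relevant linear part lies in some $\GL_n(\mathbb{Q})$; finite rank together with the boundedness of periodic subgroups of $\GL_n(\mathbb{Q})$ (Minkowski--Jordan), and the absence of a free abelian quotient forced by \cite{S2} applied to $G/G'$, should show that $G/F$ is finite. Combining the two gives $G/K$ periodic-by-finite, hence (again \v{C}ernikov of bounded exponent) finite. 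The delicate part throughout is verifying that the locus where $G$ acts non-unipotently, or contributes an infinite ``translation'' direction, is genuinely finite rather than merely bounded in rank; this is exactly the place where finite rank is indispensable.

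Finally, the splitting $G=K\rtimes H$ is soft once $K$ is in hand. Writing $Q=G/K$, a finite group, I would induct up the upper central series $1=Z_0\leq Z_1\leq\cdots\leq Z_c=K$ of $K$: each factor $Z_{i+1}/Z_i$ is a $\mathbb{Q}Q$-module which is uniquely $|Q|$-divisible, so $H^i(Q,Z_{i+1}/Z_i)$ is simultaneously annihilated by $|Q|$ and a group on which $|Q|$ acts invertibly, whence it vanishes for $i\geq 1$. Vanishing of the relevant $H^2$ removes the successive extension obstructions and vanishing of $H^1$ controls conjugacy, producing a complement $H\cong Q$. This yields $G=K\rtimes H$ with $K$ a torsion-free radicable nilpotent characteristic subgroup and $H$ finite, as claimed.
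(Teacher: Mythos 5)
Your architecture (build a torsion-free radicable nilpotent core $K$, show $G/K$ is finite, then split cohomologically) is reasonable and genuinely different from the paper's, but the proof has a real gap exactly where you predicted ``the main resistance'': the finiteness of $G/K$. Your argument splits this into $|F/K|<\infty$ and $|G/F|<\infty$, and the second half is never actually proved. You invoke Minkowski--Jordan for periodic subgroups of $\GL_n(\mathbb{Q})$ and the absence of free abelian quotients of $G/G'$, but these two facts do not by themselves exclude the remaining possibilities: $G/F$ could a priori contain a torsion-free divisible abelian section $\mathbb{Q}^k$ acting non-unipotently on $F$ (ruling this out needs an argument about divisible subgroups of $\GL_n(\mathbb{Q})$, e.g.\ that eigenvalue characters land in the multiplicative group of a number field, which is free-abelian-by-finite), or a quasicyclic section (whose elements need not lift to torsion elements of $G$, so your bounded-exponent observation does not apply to it). ``Should show that $G/F$ is finite'' is the theorem's entire content at this point, and none of the cited ingredients is assembled into an argument. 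Two smaller soft spots in the same region: the Fitting subgroup of a soluble group of finite rank need not be nilpotent in general (the locally dihedral $2$-group is a Hirsch--Plotkin radical of rank $2$ that is not nilpotent), so you need the bounded torsion coming from $\omega(G)<\infty$ before asserting $F$ is nilpotent; and the deduction that $F/K$ has finite exponent again requires an argument, since exponent bounds on $\Tor(G)$ do not pass to quotients.

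For comparison: the paper avoids all of this machinery by inducting on the derived length. The quotient $G/G^{(d-1)}$ has the desired decomposition by induction, the bottom layer $G^{(d-1)}$ decomposes as $D_1\oplus T_1$ with $T_1$ finite by Schwachh\"ofer--Stroppel plus the finite-rank torsion lemma, and the core is obtained as $K=A^{e}$ where $A$ is the preimage of the radicable part of the quotient and $e=\exp(T_1)$; finiteness of $G/K$ then falls out of the induction rather than requiring any analysis of $G/\mathrm{Fit}(G)$ or of linear groups over $\mathbb{Q}$. Your final splitting step via vanishing of $H^1$ and $H^2$ for uniquely divisible modules over a finite group is correct and is essentially the cohomological restatement of the paper's Lemmas~\ref{lem.abelian.sub} and~\ref{lem.soluble.sub}; if you can close the finiteness gap, the rest of your proof goes through.
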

We do not know whether the hypothesis that $G$ has finite rank is really needed in Theorem A. The proof that we present here uses this assumption in a very essential way. 

In \cite{LM}, T.\,J. Laffey and D. MacHale showed that $G$ is a finite group in which the order $|G|$ is not prime power and $\omega(G)=3$ if and only if $|G|=pq^{n}$, the Sylow $q$-subgroup $Q$ is a normal elementary abelian subgroup of $G$ and $P$ is a Sylow $p$-subgroup which acts fixed-point-freely on $Q$. See also \cite{MS1} for groups with $\omega(G) \leqslant 3$ (almost homogeneous groups). 

Recall that a group $G$ has mixed order if it contains non-trivial elements of finite order and also elements of infinite order. We obtain the following classification.  

\begin{thmB}
Let $G$ be a mixed order soluble group with finite rank. We have $\omega(G)=3$ if and only if $G=A \rtimes H$ where $|H|=p$ for some prime $p$, $H$ acts fixed-point-freely on $A$ and $A=\mathbb{Q}^n$ for some positive integer $n$.
\end{thmB}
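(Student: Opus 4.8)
The plan is to prove both implications, treating the (easier) converse first and reserving the structural analysis for the direct implication. Throughout, the guiding principle is that automorphisms preserve element orders and fix the characteristic subgroup supplied by Theorem A.

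For the converse, suppose $G=A\rtimes H$ with $A=\mathbb{Q}^n$, $|H|=p$ prime, and $H$ acting fixed-point-freely. I would first record that, since $H$ is fixed-point-free of prime order, the operator $1+h+\cdots+h^{p-1}$ annihilates $A$ (the trivial $\mathbb{Q}[H]$-constituent is absent), so a direct norm computation gives $(ah^i)^p=1$ for every $a\in A$ and $1\le i\le p-1$; hence every element outside $A$ has order $p$, while $A\setminus\{1\}$ consists of the elements of infinite order, so $A$ is characteristic. It then remains to see that $\Aut(G)$ is transitive on each of $A\setminus\{1\}$ and $G\setminus A$. For the former, fixed-point-freeness makes $A$ a module over $\mathbb{Q}[H]/(1+x+\cdots+x^{p-1})\cong\mathbb{Q}(\zeta_p)$, whence $A\cong\mathbb{Q}(\zeta_p)^m$, and the $\mathbb{Q}[H]$-automorphisms $\GL_m(\mathbb{Q}(\zeta_p))$ — which extend to $G$ by fixing $H$ — already act transitively on the nonzero vectors. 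For $G\setminus A$, conjugation by $A$ moves $h^i$ to an arbitrary element of the coset $Ah^i$ (since $h^i-1$ is invertible on $A$), and to pass between cosets I would produce, for each $i$, an automorphism sending $h\mapsto h^i$: this needs a $\phi\in\GL_n(\mathbb{Q})$ with $\phi\alpha\phi^{-1}=\alpha^i$, where $\alpha$ denotes the action of $h$, and such $\phi$ exists because $\alpha$ and $\alpha^i$ both have minimal polynomial $\Phi_p$ and hence the same rational canonical form. Together these give exactly three orbits.

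For the direct implication, assume $G$ is mixed, soluble of finite rank, with $\omega(G)=3$. Since $\{1\}$ is always an orbit and orders are preserved, the two remaining orbits must be the set $O_2$ of all nontrivial torsion elements and the set $O_3$ of all elements of infinite order; as $O_2$ is a single orbit, its elements share one order, forced to be a prime $p$. Invoking Theorem A I write $G=K\rtimes H$ with $K$ torsion-free, radicable, nilpotent and characteristic and $H$ finite. Because $K$ is characteristic and torsion-free, the orbit of any $x\in K\setminus\{1\}$ stays inside $K$, giving $O_3=K\setminus\{1\}$ and therefore $O_2=G\setminus K$; in particular $H$ is a finite $p$-group of exponent $p$. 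If some $1\neq h\in H$ fixed a nontrivial $k_0\in K$, then $k_0h\in G\setminus K$ would satisfy $(k_0h)^p=k_0^p\neq 1$, contradicting that $k_0 h$ lies in $O_2$; hence every nontrivial element of $H$ acts fixed-point-freely on $K$.

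It remains to show $K$ is abelian and $|H|=p$. For abelianness I would argue that $K'$ is characteristic in $G$, so if $K'\neq\{1\}$ then the orbit of an element of $K'\setminus\{1\}$ — which is all of $O_3=K\setminus\{1\}$ — would be contained in $K'$, forcing $K=K'$ and contradicting nilpotency of $K\neq\{1\}$; thus $K'=\{1\}$, and a torsion-free divisible abelian group of finite rank $n$ is $\mathbb{Q}^n=:A$. To force $|H|=p$ I would rule out a subgroup $\mathbb{Z}/p\times\mathbb{Z}/p$: a nontrivial irreducible character of $\mathbb{Z}/p\times\mathbb{Z}/p$ takes the value $1$ on some nonzero element (its kernel has order $p$), so $\mathbb{Z}/p\times\mathbb{Z}/p$ cannot act fixed-point-freely on $A\otimes\mathbb{C}$; since $H$ has exponent $p$, a central element of order $p$ together with any element outside the subgroup it generates would produce such a subgroup, so $|H|=p$. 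Assembling these yields $G=A\rtimes H$ of the required form. I expect the principal obstacle to be the transitivity of $\Aut(G)$ on $G\setminus A$ in the converse: manufacturing honest automorphisms of the semidirect product that permute the cosets $Ah^i$ rests on the rational-conjugacy $\phi\alpha\phi^{-1}=\alpha^i$ and on verifying these maps are well defined homomorphisms, whereas the orbit identifications and the $K'$-argument in the direct implication are comparatively soft.
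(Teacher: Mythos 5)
Your proof is correct, and its overall architecture is the same as the paper's: both directions rest on Theorem~A, on the observation that the three orbits must be $\{1\}$, the non-trivial torsion elements and the infinite-order elements, and on the fact that fixed-point-freeness forces the minimal polynomial of $h$ on $A$ to be $\Phi_p(x)=x^{p-1}+\cdots+1$, whence every element of $G\setminus A$ has order $p$. The differences lie in how the two technical pressure points are executed. For transitivity on $G\setminus A$ in the converse, the paper applies the cyclic decomposition theorem to produce, for given pairs $(b,\alpha)$ and $(c,\beta)$, bases of $A$ adapted to $\alpha$ and $\beta$ and a single automorphism carrying one pair to the other; you instead factor the job into three families of automorphisms: $\mathbb{Q}(\zeta_p)$-linear maps acting transitively on $A\setminus\{0\}$, inner automorphisms sweeping out each coset $Ah^i$ (using invertibility of $T_{h^i}-1$), and automorphisms $h\mapsto h^i$ realized by a rational conjugacy $\phi\alpha\phi^{-1}=\alpha^i$. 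Both arguments rest on the same module theory ($A$ is a vector space over $\mathbb{Q}[x]/(\Phi_p)\cong\mathbb{Q}(\zeta_p)$), and yours has the merit of isolating where each identification comes from; just note explicitly that ``same minimal polynomial implies same rational canonical form'' is being used only because $\Phi_p$ is irreducible, so the module is semisimple and determined by its $\mathbb{Q}(\zeta_p)$-dimension --- the implication fails for general minimal polynomials. In the forward direction the paper is terse: it asserts that $A$ must be abelian and $H$ elementary abelian, and cites a lemma of M\"aurer--Stroppel for ``fixed-point-free and elementary abelian implies cyclic''; your argument that $K'$ would otherwise swallow the whole orbit $K\setminus\{1\}$, and your character-theoretic exclusion of a fixed-point-free $C_p\times C_p$ on $A\otimes\mathbb{C}$, make this step self-contained. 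Both routes are sound; yours is somewhat longer but more elementary in its dependencies.
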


\section{Proofs}

A well-known result in the context of extensions of finite groups, due to I. Schur,  states that if $G$ is a finite group and $N$ is a normal abelian subgroup with $(|N|,|G:N|)=1$, then there exists a complement $K$ of $N$. Recall that $K$ is a complement of (a normal subgroup) $N$ in $G$ if $N \cap K=1$ and $G = NK$. In particular, $G = N \rtimes K$. Now, we prove that Schur's theorem holds under a more general assumption that $G$ contains a divisible abelian subgroup of finite index. The proof presented here is adapted from the ideas of the finite case (cf. \cite[9.1.2]{Rob}).   

\begin{lem}\label{lem.abelian.sub}
Let $A$ be a divisible normal abelian subgroup of finite index of a group $G$. Then there exists a subgroup $H$ of $G$ such that $G = A \rtimes H$.
\end{lem}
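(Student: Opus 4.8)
The plan is to mirror Schur's classical splitting argument for abelian kernels (as in \cite[9.1.2]{Rob}), replacing the coprimality condition by divisibility. Write $Q = G/A$ and $m = |Q| = |G:A| < \infty$. Since $A$ is abelian and normal, conjugation induces a well-defined action of $Q$ on $A$ (the choice of coset representative is immaterial, as inner automorphisms of $A$ are trivial); denote this action by $(x,a) \mapsto {}^{x}a$. First I would fix a transversal $\{ l_x : x \in Q \}$ to $A$ in $G$ with $l_1 = 1$ and define the factor set $f \colon Q \times Q \to A$ by $l_x l_y = f(x,y)\, l_{xy}$. Associativity in $G$ then yields the $2$-cocycle identity ${}^{x}f(y,z)\, f(x,yz) = f(x,y)\, f(xy,z)$.

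Next I would exploit the finiteness of $Q$ by multiplying the cocycle identity over all $z \in Q$. Setting $g(x) = \prod_{z \in Q} f(x,z)$, the three substitutions $\prod_z {}^{x}f(y,z) = {}^{x}g(y)$, $\prod_z f(x,yz) = g(x)$ and $\prod_z f(xy,z) = g(xy)$ collapse the relation, using commutativity of $A$, to
\[
f(x,y)^{m} = {}^{x}g(y)\cdot g(x) \cdot g(xy)^{-1}.
\]
Thus $f^{m}$ is the coboundary of the $1$-cochain $g$; equivalently $m\,[f] = 0$ in $H^{2}(Q,A)$.

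The decisive step is to pass from $f^{m}$ being a coboundary to $f$ itself being a coboundary, and this is exactly where divisibility of $A$ takes the place of Schur's coprimality. Since $A$ is divisible, I can choose for each $x \in Q$ an element $e(x) \in A$ with $e(x)^{m} = g(x)$. Because $Q$ acts by automorphisms and $A$ is abelian, the $m$-power map commutes with the formation of coboundaries, so the coboundary $\delta e$ of $e$ satisfies $(\delta e)^{m} = \delta(e^{m}) = \delta(g) = f^{m}$; hence $f$ and $\delta e$ differ by a factor set whose values lie in the $m$-torsion of $A$, which one then rectifies to get $f = \delta e$. The corrected transversal $l'_x := e(x)^{-1} l_x$ (the precise sign to be pinned down by a direct check) then satisfies $l'_x l'_y = l'_{xy}$, so $H := \{ l'_x : x \in Q \}$ is a subgroup with $H \cap A = 1$ and $G = AH$, giving $G = A \rtimes H$.

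I expect the main obstacle to be precisely this root-extraction step: ensuring that the $m$-th roots $e(x)$ supplied by divisibility can be arranged so that the residual $m$-torsion factor set vanishes, so that $\{l'_x\}$ genuinely closes up into a complement rather than merely splitting the extension ``up to $m$-torsion''. In Schur's finite setting the map $a \mapsto a^{m}$ is bijective and this is automatic; here one must lean on the structure of $A$ to complete the rectification. Everything else — the well-definedness of the $Q$-action, the multiplicative averaging identity, and the verification that the corrected transversal is closed under multiplication — is routine bookkeeping inside the abelian group $A$.
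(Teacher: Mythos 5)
Your argument reproduces the paper's proof almost verbatim up to the point you yourself flag as the main obstacle, and that obstacle is a genuine gap: nothing in your write-up (nor in the hypotheses as stated) lets you ``rectify'' the residual $m$-torsion-valued factor set. After choosing $e(x)$ with $e(x)^m = g(x)$ you only know $(\delta e)^m = f^m$, i.e.\ that $f\cdot(\delta e)^{-1}$ is a $2$-cocycle with values in the $m$-torsion subgroup $A[m]$; re-choosing the roots alters $\delta e$ only by the coboundary of an $A[m]$-valued $1$-cochain, so the obstruction to finishing is precisely the class of the extension in $H^2(Q,A)$, which divisibility alone does not kill (it only guarantees the class is divisible within a group that it already annihilates). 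Indeed the statement is false for divisible $A$ that is not torsion-free: for $Q=(\mathbb{Z}/p)^2$ acting trivially one has $H^2(Q,\mathbb{Q}/\mathbb{Z})\cong\mathrm{Hom}(H_2(Q,\mathbb{Z}),\mathbb{Q}/\mathbb{Z})\cong\mathbb{Z}/p\neq 0$, so there is a non-split central extension of $(\mathbb{Z}/p)^2$ by the divisible abelian group $\mathbb{Q}/\mathbb{Z}$.

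The paper's own proof closes this exact gap by invoking the phrase ``since $A$ is torsion-free'': when $A$ is torsion-free (hence uniquely divisible) the map $a\mapsto a^m$ is injective, so $(\delta e)^m=f^m$ immediately forces $\delta e=f$ and the corrected transversal closes up into a complement. That hypothesis does not appear in the statement of the lemma, but it is satisfied in every application made of it in the paper (Lemma~\ref{lem.soluble.sub} and Theorem A). So your approach is the intended one, and your proof becomes complete the moment you add ``torsion-free'' to the hypotheses and note that $m$-th roots are then unique; without that addition the rectification step you defer cannot be carried out, and in fact no proof exists.
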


\begin{proof}
Let $B = G/A$. From each coset $x$ in $B$ we choose a representative $t_{x}$, so that the set $T = \{t_{x} \mid x \in B\}$ is a transversal to $A$ in $G$. Since $t_{x}t_{y}A = t_{xy}A$, there is an element $c(x, y)$ of $A$ such that $t_{x}t_{y} = t_{xy}c(x, y)$. Then
$$(t_{x}t_{y})t_{z} = t_{xy}c(x, y)t_{z} = t_{xy}t_{z}c(x, y)^{z} = t_{xyz}c(xy, z)c(x, y)^{z},$$
$$t_{x}(t_{y}t_{z}) = t_{x}t_{yz}c(y, z) = t_{xyz}c(x, yz)c(y, z)$$
and $c(xy, z)c(x, y)^{z} = c(x, yz)c(y, z)$, for each $x, y \in B$. Consider the element $d(y) = \prod_{x \in B} c(x, y) \in A$. As $A$ is an abelian group $d(z)d(y)^{z} = d(yz)c(y, z)^{n}$, where $n = |B|$. We obtain that $d(yz) = d(y)^{z}d(z)c(y, z)^{-n}$. Now, since $A$ is a divisible group, there exists $e(y) \in A$ such that $e(y)^{n} = d(y)^{-1}$ for each $y \in B$. Hence
$$e(zy)^{-n} = (e(y)^{z}e(z)c(y, z))^{-n}.$$
Since $A$ is torsion-free, it follows that $e(zy) = (e(y)^{z}e(z)c(y, z))$. Define $s_{x} = t_{x}e(x)$, then
$$s_{y}s_{z} = t_{y}t_{z}e(y)^{z}e(z) = t_{yz}c(y, z)e(y)^{z}e(z) = t_{yz}e(yz) = s_{yz}.$$
Thus $x \mapsto s_{x}$ defines a homomorphism $\phi: B \rightarrow G$. Now $s_{x} = 1$ implies  that $t_{x} \in A$ and $x = A = 1_{B}$. From this we conclude that $H = B^{\phi}$ is the desired complement.
\end{proof}

Now, we consider groups with finitely many automorphism orbits with a characteristic torsion-free soluble subgroup of finite index (see also Schur-Zassenhaus Theorem \cite[9.1.2]{Rob}). 
\begin{lem}
\label{lem.soluble.sub} Let $n$ be a positive integer. Let $G$ be a group such that $\omega(G)< \infty$ and $A$ a torsion-free characteristic subgroup of $G$ with finite index $n$. If $A$ is soluble, then there exists a subgroup $H$ of $G$ such that $G = A \rtimes H$. 
\end{lem}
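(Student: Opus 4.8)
The plan is to induct on the derived length $d$ of $A$, using Lemma \ref{lem.abelian.sub} as the engine and reducing the general soluble case to it by a Schur--Zassenhaus-type argument. Two preliminary facts will be needed. First, passing to a quotient by a characteristic subgroup does not increase the number of automorphism orbits: if $M$ is characteristic in $G$ then every $\alpha \in \Aut(G)$ induces $\bar\alpha \in \Aut(G/M)$, the canonical projection is equivariant, and the $\Aut(G/M)$-orbits are unions of images of $\Aut(G)$-orbits; hence $\omega(G/M) \leqslant \omega(G) < \infty$. Second, a torsion-free abelian characteristic subgroup $M \leqslant G$ with $\omega(G)<\infty$ is divisible: for each prime $p$ the subgroups $p^kM$ are characteristic in $G$, so automorphisms preserve the $p$-height $h_p$, and since $M$ is torsion-free one checks that $h_p(pa)=h_p(a)+1$; thus a single element of height $0$ would produce elements of all heights $0,1,2,\dots$ lying in pairwise distinct orbits, contradicting $\omega(G)<\infty$. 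Hence every element has infinite $p$-height for every $p$, i.e.\ $M$ is divisible.

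For the base case $d=1$ the subgroup $A$ is abelian, torsion-free, characteristic and of finite index, so by the second fact it is divisible and Lemma \ref{lem.abelian.sub} furnishes a complement $H$ with $G = A\rtimes H$.

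For the inductive step, set $M = A^{(d-1)}$, the last nontrivial term of the derived series of $A$. Then $M$ is abelian, torsion-free, and characteristic in $G$, hence divisible by the remark above. The crucial point, and the step I expect to be the main obstacle, is to show that $A/M$ is again torsion-free, so that the induction can be applied inside $G/M$; indeed $A$ torsion-free does not in general force its abelian quotients to be torsion-free. This is where divisibility of $M$ enters: if $aM$ had finite order $k>1$ then $\theta := (m \mapsto a m a^{-1})$ satisfies $\theta^k = \mathrm{id}$ on $M$ (since $a^k \in M$ centralizes the abelian group $M$), and using that $M$ is torsion-free and divisible one solves $k m' = -a^k$ for the (necessarily $\theta$-fixed) element $m' \in M$, whence $(am')^k = 1$; torsion-freeness of $A$ then forces $a = m'^{-1}\in M$, a contradiction. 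Thus $A/M$ is torsion-free.

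With this in hand, $A/M$ is a torsion-free characteristic subgroup of $G/M$ of finite index $n$, soluble of derived length $d-1$, and $\omega(G/M) \leqslant \omega(G) < \infty$; by induction there is a complement $K/M$, so that $K \leqslant G$ satisfies $K\cap A = M$, $KA = G$ and $[K:M] = n$. Now $M$ is a divisible normal abelian subgroup of finite index $n$ in $K$, so Lemma \ref{lem.abelian.sub} applied to $K$ yields $K = M \rtimes H$ with $|H| = n$. Finally $H\cap A = H \cap (K\cap A) = H\cap M = 1$, and since $A \trianglelefteq G$ the product $HA$ is a subgroup with $[HA:A] = |H| = n = [G:A]$, so $HA = G$. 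Therefore $G = A\rtimes H$, completing the induction.
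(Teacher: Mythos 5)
Your proof is correct and follows essentially the same route as the paper's: induction on the derived length of $A$, with the key step being that $A/A^{(d-1)}$ is torsion-free, after which one pulls back a complement from $G/A^{(d-1)}$ and applies Lemma \ref{lem.abelian.sub} to the preimage. The differences are only in presentation: where the paper deduces torsion-freeness of $A/A^{(d-1)}$ by applying Lemma \ref{lem.abelian.sub} to $\langle a, A^{(d-1)}\rangle$, you carry out the equivalent computation by hand, and you make explicit the divisibility of $A^{(d-1)}$ and the inequality $\omega(G/M)\leqslant\omega(G)$, which the paper uses without comment.
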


\begin{proof}
If $A$ is abelian, then the result is immediate by Lemma  \ref{lem.abelian.sub}. Assume that $A$ is non-abelian. Set $d$ the derived length of $A$. First we prove that $A/A^{(d-1)}$ is torsion-free. The subgroup $A^{(d-1)}$ is a torsion-free abelian divisible subgroup since $A$ is torsion-free and has finitely many automorphism orbits. If $A/A^{(d-1)}$ is not torsion-free, then we can find an element $a\in A$ such that $\langle a,A^{(d-1)}\rangle$ has a torsion-free divisible group of finite index. Then by  Lemma  \ref{lem.abelian.sub} the subgroup $\langle a,A^{(d-1)}\rangle$ has elements of finite order. That is a contradiction. Thus $A/A^{(d-1)}$ is torsion-free.

Now, we complete the proof arguing by induction on the derived length of $A$. Consider the quotient group $\bar{G} = G/A^{(d-1)}$. By induction we deduce that there exists a finite subgroup $\bar{B}$ of order $n$ in $\bar{G}$ such that $\bar{G} = \bar{A} \rtimes \bar{B}$. Set $B$ the inverse image of $\bar{B}$. Clearly $A^{(d-1)} \leq B$ and $A^{(d-1)}$ has finite index $n$ in $B$. Therefore, by Lemma  \ref{lem.abelian.sub} $B$ has a subgroup $H$ of order $n$ and so such a subgroup is a complement of $A$ in $G$. The result follows. 
\end{proof}

The following lemma is well-known. We supply the proof for the reader's convenience.

\begin{lem} \label{lem.abelian}
Let $G$ be an abelian group of finite rank. If $\omega(G)< \infty$, then the torsion subgroup $\Tor(G)$ is finite.  
\end{lem}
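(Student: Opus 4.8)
The plan is to reduce to the case where $G$ is a torsion group and then to show that having finitely many automorphism orbits forces a bounded exponent, at which point the structure theory of abelian groups of bounded exponent finishes the argument. First I would observe that $T := \Tor(G)$ is a characteristic subgroup of $G$, so every element of $\Aut(G)$ restricts to an automorphism of $T$. Since $T$ is a union of $\Aut(G)$-orbits and the image of the restriction map $\Aut(G) \to \Aut(T)$ is contained in $\Aut(T)$, the $\Aut(T)$-orbits on $T$ are coarser than those induced by $\Aut(G)$, and hence $\omega(T) \le \omega(G) < \infty$. As a subgroup of a group of finite rank, $T$ again has finite rank. Thus it suffices to prove the lemma for $T$ itself, that is, to show that a torsion abelian group of finite rank with finitely many automorphism orbits is finite.

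Next I would exploit that automorphisms preserve the order of an element: elements lying in the same $\Aut(T)$-orbit have equal order, so the set of orders occurring in $T$ has at most $\omega(T)$ elements and is therefore finite. Letting $N$ be the least common multiple of these finitely many orders, every element of $T$ is annihilated by $N$, so $T$ has bounded exponent. In particular only the (finitely many) primes dividing $N$ can occur in $T$.

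To conclude I would invoke Pr\"ufer's theorem (see \cite{Rob}): an abelian group of bounded exponent is a direct sum of finite cyclic groups, say $T = \bigoplus_{i} C_i$. For each prime $p$, the subgroup $T[p] = \{x \in T : px = 0\}$ is an $\mathbb{F}_p$-vector space whose dimension equals the number of summands $C_i$ of $p$-power order; since a finite elementary abelian subgroup of rank $d$ requires $d$ generators, the finite-rank hypothesis forces $\dim_{\mathbb{F}_p} T[p] \le r$ for every $p$. Hence for each of the finitely many relevant primes there are at most $r$ cyclic summands, so the direct sum is finite and $T$ is a finite group, as required.

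The main obstacle is the pivotal step that finitely many orbits forces bounded exponent; everything after it is bookkeeping together with the classical structure theorem for bounded abelian groups. A secondary point demanding care is the orbit comparison $\omega(T) \le \omega(G)$, which relies on $T$ being characteristic and on the fact that enlarging the acting automorphism group can only merge orbits.
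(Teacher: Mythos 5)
Your proof is correct and follows essentially the same route as the paper: finitely many orbits forces $\Tor(G)$ to have bounded exponent, and bounded exponent together with finite rank forces finiteness. You simply supply the details (order-preservation of automorphisms, Pr\"ufer's theorem, and the bound on $\dim_{\mathbb{F}_p} T[p]$) that the paper's two-line proof leaves implicit.
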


\begin{proof}
Since $G$ has finitely many automorphism orbits, it follows that the exponent $\exp(\Tor(G))$ is bounded. As $G$ has finite rank we have that $\Tor(G)$ is finitely generated. We deduce that $\Tor(G)$ is finite, which completes the proof.    
\end{proof}

The following result provides a description of radicable nilpotent groups of finite rank (see \cite[Theorem 5.3.6]{Rob2} for more details). Recall that a group $G$ is said to be radicable if each element is an $n$th power for every positive integer $n$.

\begin{lem} \label{lem.radicablenil}
Let $G$ be a soluble group with finite rank. Then the following are
equivalent:
\begin{enumerate}
    \item[(i)] $G$ has no proper subgroups of finite index;
    \item[(ii)] $G = G^m$ for all $m > 0$;
    \item[(iii)] $G$ is radicable and nilpotent.
\end{enumerate}

\end{lem}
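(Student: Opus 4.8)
The plan is to run the cycle (iii) $\Rightarrow$ (ii) $\Rightarrow$ (i) $\Rightarrow$ (iii), where the first two implications are soft and all the content sits in (i) $\Rightarrow$ (iii). If $G$ is radicable then every element is an $m$-th power, so the set of $m$-th powers is already all of $G$ and a fortiori $G=G^{m}$ for every $m$, giving (iii) $\Rightarrow$ (ii). For (ii) $\Rightarrow$ (i), let $H$ be a subgroup of finite index and let $N$ be its normal core, so $N\trianglelefteq G$ with $m:=|G:N|<\infty$. Every element of $G/N$ has order dividing $m$, hence $g^{m}\in N$ for all $g\in G$ and therefore $G^{m}\leq N$. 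Since $G=G^{m}$ by hypothesis, $G=N\leq H$, so $G$ has no proper subgroup of finite index.

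For (i) $\Rightarrow$ (iii) I would first reduce radicability to nilpotency. Assuming $G$ is nilpotent, the abelianization $G/G'$ is an abelian group of finite rank inheriting (i), hence divisible (for finite-rank abelian groups, absence of proper finite-index subgroups is equivalent to $mA=A$ for all $m$, i.e.\ divisibility); a nilpotent group with divisible abelianization is radicable, by the standard lifting of divisibility along the lower central series. So it suffices to prove that $G$ is nilpotent, which I do by induction on the derived length $d$. The case $d=1$ is clear, so let $d\geq 2$ and put $A=G^{(d-1)}$, an abelian characteristic subgroup. The quotient $G/A$ is soluble of finite rank and inherits (i) (a proper finite-index subgroup of $G/A$ pulls back to one of $G$), so it is nilpotent by induction. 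It then suffices to show $A\leq Z_{k}(G)$ for some $k$: for then $G/Z_{k}(G)$ is a quotient of the nilpotent group $G/A$, hence nilpotent, and so is $G$. Equivalently, I must show that $G$ acts \emph{unipotently} on $A$, i.e.\ $[A,{}_{k}G]=1$ for some $k$.

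This unipotency is the crux. Set $\bar G=G/C_{G}(A)\hookrightarrow\Aut(A)$; it is soluble of finite rank and inherits (i). Write $T=\Tor(A)$: it is a periodic abelian group of finite rank, hence Chernikov, so its divisible part $D$ (a finite direct sum of Pr\"ufer groups $\mathbb{Z}(p^{\infty})$) has finite index in $T$ and is characteristic in $G$. Along the $\bar G$-invariant series $1\leq D\leq T\leq A$ each factor carries a trivial or unipotent action: on the finite group $T/D$ the image of $\bar G$ is a finite quotient of $\bar G$, hence trivial by (i); on each summand $\Aut(\mathbb{Z}(p^{\infty}))\cong\mathbb{Z}_{p}^{\times}$ has no nontrivial divisible subgroup, so the image (divisible, by (i)) is again trivial; and on the torsion-free group $A/T$ of rank $n$ we obtain a representation $\bar G\to\GL_{n}(\mathbb{Q})$. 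By Mal'cev's theorem a soluble subgroup of $\GL_{n}$ has a triangularizable subgroup of finite index over $\bar{\mathbb{Q}}$, which by (i) is all of $\bar G$; the resulting diagonal characters $\chi_{i}\colon\bar G\to\bar{\mathbb{Q}}^{\times}$ have image that is divisible of finite rank and consists of algebraic numbers of degree $\leq n$. Such a subgroup of $\bar{\mathbb{Q}}^{\times}$ is trivial: it has no Pr\"ufer part, since roots of unity of degree $\leq n$ have bounded order, and no torsion-free part, since the $k$-th roots of a fixed non-torsion algebraic number have unbounded degree (equivalently, by Northcott's theorem, algebraic numbers of bounded degree with height tending to $0$ are finite in number). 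Hence all eigenvalues equal $1$, $\bar G$ is unipotent on $A/T$, and combining the three factors gives $[A,{}_{k}G]=1$.

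The main obstacle is exactly this last step: converting the purely group-theoretic hypotheses into a linear representation and killing the semisimple (toral) part of the action. The finite-rank assumption is indispensable here, as it bounds the degree of the eigenvalues and thereby forces the pertinent multiplicative groups ($\mathbb{Z}_{p}^{\times}$ and bounded-degree subgroups of $\bar{\mathbb{Q}}^{\times}$) to contain no nontrivial divisible subgroup; this is precisely what makes a group without proper finite-index subgroups act unipotently. The torsion bookkeeping (peeling off $D$ and $T/D$) is routine and introduces no new idea. Robinson's treatment replaces the linear and number-theoretic argument by an internal structural one, but the essential point, the triviality of the semisimple action, is the same.
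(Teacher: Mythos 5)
The paper itself offers no proof of this lemma: it is quoted from Lennox--Robinson, \emph{The Theory of Infinite Soluble Groups}, Theorem 5.3.6, so there is no internal argument to compare yours against. Your cycle (iii)$\Rightarrow$(ii)$\Rightarrow$(i) is correct, and the architecture of (i)$\Rightarrow$(iii) --- induction on derived length, reduction to unipotence of the action of $G$ on $A=G^{(d-1)}$, Mal'cev triangularization over $\bar{\mathbb{Q}}$, and the Kronecker/Northcott height argument showing that a divisible subgroup of $\bar{\mathbb{Q}}^{\times}$ all of whose elements have degree at most $n$ is trivial --- is sound and, in the linear part, complete.

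There is, however, a genuine error in the torsion step. A periodic abelian group of finite Pr\"ufer rank need \emph{not} be Chernikov: $\bigoplus_{p}\mathbb{Z}/p\mathbb{Z}$, summed over all primes, has rank $1$ but trivial divisible part of infinite index. So your claims that $T=\Tor(A)$ is Chernikov and that $T/D$ is finite both fail, and with them the assertion that the action on $T/D$ factors through a finite quotient of $\bar{G}$; finite rank only forces each \emph{primary} component $T_{p}$ to be Chernikov. A second, smaller slip: for $D\cong\mathbb{Z}(p^{\infty})^{k}$ with $k>1$ the automorphism group is $\GL_{k}(\mathbb{Z}_{p})$, not $(\mathbb{Z}_{p}^{\times})^{k}$, since automorphisms mix the Pr\"ufer summands, so arguing ``on each summand'' is not legitimate. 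Both defects are repaired by one observation: each $T_{p}$ is characteristic in $G$ and is a direct sum of finitely many cyclic and quasicyclic groups, hence $\Aut(T_{p})$ embeds in $\prod_{m}\Aut(\Omega_{m}(T_{p}))$ with finite factors and is residually finite; as $\bar{G}$ has no proper subgroups of finite index, its image there is trivial, so $[T,G]=1$ and the series $1\le T\le A$ already yields $[A,{}_{n+1}\,G]=1$. With that substitution your proof goes through.
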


We are now in a position to prove Theorem A. 

\begin{proof}[Proof of Theorem A]
We argue by induction on derived length of $G$. 

Assume that $G$ is abelian. By Schwachh\"ofer-Stroppel's result \cite{S2}, $G = D \oplus T$, where $D$ is characteristic torsion free divisible subgroup and $T$ is the torsion subgroup of $G$. By Lemma \ref{lem.abelian}, the torsion subgroup $T = \Tor(G)$ is a finite subgroup of $G$, the result follows.    

Now, we assume that $G$ is non-abelian. 
Set $d$ the derived length of $G$. Arguing as in the previous paragraph, we deduce that  $G^{(d-1)} = D_1 \oplus T_1$, where $D_1$ is a characteristic torsion-free divisible subgroup and $T_1$ is the torsion subgroup of $G^{(d-1)}$ and so, $T_1$ is finite. By induction $G/G^{(d-1)}$  has the desired decomposition. More precisely, $G^{(d-1)} = D_1 \oplus T_1$ and $G/G^{(d-1)} = \bar{A} \rtimes \bar{B}$ where $\bar{A}$ is torsion-free and $\bar{B}$ is finite. Note that $\bar{A}^n=\bar{A}$ for any positive integer $n$, since the quotient groups  $\bar{A}^{(i)}/\bar{A}^{(i+1)}$ are torsion-free divisible groups (we can use Lemma \ref{lem.soluble.sub} to conclude that each quotient is torsion-free).

Note that the centralizer $C_G(T_1)$ is a subgroup of finite index in $G$, because $G/C_G(T_1)$ embeds in the automorphism group of $T_1$ which has finite order. Let $A$ be the inverse image of $\bar{A}$. As $\bar{A}$ is torsion-free and $\bar{A}^n=\bar{A}$ for any positive integer $n$, we have $A \leqslant C_G(T_1)$. Thus $T_1\leq Z(A)$ and $\Tor(A)=T_1$. Set $K=A^e$, where $e=\exp(T_1)$. Then $K$ is torsion-free and has finite index in $G$. Therefore, by Lemma \ref{lem.soluble.sub}, there exists a finite subgroup $H$ such that $G = K \rtimes H$. According to Lemma \ref{lem.radicablenil}, we deduce that $K$ is a radicable nilpotent group (the subgroup $K$ has no proper subgroups of finite index). The proof is complete. 
\end{proof}

\noindent Now we will deal with Theorem B: {\it Let Let $G$ be a mixed order soluble group with finite rank. We have $\omega(G)=3$ if and only if $G=A \rtimes H$ where $A=\mathbb{Q}^n$ for some positive integer $n$, $|H|=p$ for some prime $p$ and $H$ acts fixed-point-freely on $A$.} 

\begin{proof}[Proof of Theorem B] First assume that $G$ is a mixed order soluble group of finite rank and have $\omega(G)=3$. By Theorem A, $G = A \rtimes H$ where $A$ is a torsion-free radicable nilpotent subgroup and $H$ is a finite group. Since $\omega(G)=3$ and $G$ has mixed order, it follows that $A$ must be abelian (so that $A=\mathbb{Q}^n$) and $H$ is an elementary abelian $p$-subgroup. On the other hand, since $A$ is characteristic, we deduce that all elements in $G\setminus A$ have order $p$ and then $H$ acts fixed-point-freely on $A$, so that $H$ is cyclic \cite[Lemma 2.2]{MS1}.

Conversely, suppose that $G=A \rtimes H$ where $A=\mathbb{Q}^n$, $|H|=p$ for some prime $p$, and $H$ acts fixed-point-freely on $A$. First we prove that all elements in $G \setminus A$ have order $p$. Set $h \in G$ such that $H = \langle h \rangle$. Since $h^p = 1$, we conclude that the minimal polynomials $m_{h} = m_{h}(x)$ of $T_{h}: \mathbb{Q}^n \to \mathbb{Q}^n$, given by $a\mapsto a^{h}$,
divides 
$$x^p - 1 = (x - 1)(x^{p-1} + x^{p-2} + \ldots + x + 1).$$
Consequently, $m_{h}(x) = x^{p-1} + x^{p-2} + \ldots + x + 1$, because $h$ acts fixed-point-freely on $A$. Thus using the identity $(xy)^n = x^n(y^{x^{n-1}}) \ldots y^{x}y$, we obtain  
$$(h^{j}a)^p = h^{jp}(aa^{h^{j}}\ldots a^{h^{(p-1)j}})= 1$$
for all $a \in \mathbb{Q}^{n}$. So all elements of $G \setminus A$ have order exactly $p$ and act fixed-point-freely on $A$. 

Now, let $b, c \in A \setminus \{1\}$ and $\alpha,\beta \in G \setminus A$. By Cyclic Decomposition Theorem, there exist $b_{1}, b_{2}, \ldots, b_{t}, c_{1}, c_{2}, \ldots, c_{t} \in A$ such that
$$\{b_{1}, b_{1}^{\alpha}, \ldots, b_{1}^{\alpha^{p-2}}, \ldots, b_{t}, b_{t}^{\alpha}, \ldots, b_{t}^{\alpha^{p-2}}\}$$ and 
$$\{c_{1}, c_{1}^{\beta}, \ldots, c_{1}^{\beta^{p-2}}, \ldots, c_{t}, c_{t}^{\beta}, \ldots, c_{t}^{\beta^{p-2}}\}$$
are bases of $A$. Without loss of generality we can assume that $b=b_1$ and $c=c_1$. Thus the map given by 
$$b_i \mapsto c_i \ \textrm{and} \  \alpha \mapsto \beta,$$
where $i = 1,\ldots, t$ extends to an automorphism of $G$. Hence all non-trivial elements of $A$ belong to the same orbit under the action of $\Aut(G)$, and all elements in $G \setminus A$ are in the same orbit under the action of $\Aut(G)$. The proof is complete. 
\end{proof}

\end{document}